\newcommand{\clos}{\operatorname{clos}}
\newcommand{\Int}{\operatorname{int}}
\newcommand{\R}{\mathbb{R}}
\newcommand{\disk}{\mathbb{D}}
\newtheorem{theo}{Theorem}[section]
\newtheorem{lemma}[theo]{Lemma}
\newtheorem{prop}[theo]{Proposition}
\newtheorem{MainTheorem}{Theorem}
\numberwithin{equation}{section}
\begin{document}
\title[Collinear Fractals and Bandt's Conjecture]{Collinear Fractals and Bandt's Conjecture}

\author[B. Espigul\'e]{Bernat Espigul\'e}
\address{Departament d'Inform\`atica, Matem\`atica Aplicada i Estad\'{\i}stica,
Universitat de Gi\-ro\-na, c/ Maria Aur\`elia Capmany 61, 17003 Girona, Spain.
ORCID 0000-0002-8036-7851}
\email{bernat@espigule.com}

\author[D. Juher]{David Juher}
\address{Departament d'Inform\`atica, Matem\`atica Aplicada i Estad\'{\i}stica,
Universitat de Gi\-ro\-na, c/ Maria Aur\`elia Capmany 61, 17003 Girona, Spain.
ORCID 0000-0001-5440-1705}
\email{david.juher@udg.edu}

\author[J. Salda\~na]{Joan Salda\~na}
\address{Departament d'Inform\`atica, Matem\`atica Aplicada i Estad\'{\i}stica,
Universitat de Gi\-ro\-na, c/ Maria Aur\`elia Capmany 61, 17003 Girona, Spain. ORCID 0000-0001-6174-8029}
\email{joan.saldana@udg.edu}

\thanks{This work has been funded by grants PID2023-146424NB-I00 of Ministerio de Ciencia,  Innovaci\'on y Universidades and 2021 SGR 00113 of Ge\-ne\-ra\-litat de Catalunya.}

\subjclass{Primary 28A80; Secondary 28A78, 37F45, 11R06, 26C10}
 \keywords{collinear fractals, Bandt's conjecture, connectedness locus, iterated function systems, Mandelbrot set, roots of integer polynomials.}

\begin{abstract}
For a complex parameter $c$ outside the unit disk and an integer $n\ge2$, we examine the \emph{$n$-ary collinear fractal} $E(c,n)$, defined as the attractor of the iterated function system $\{\mbox{$f_k \colon \mathbb{C} \longrightarrow \mathbb{C}$}\}_{k=1}^n$, where $f_k(z):=1+n-2k+c^{-1}z$. We investigate some topological features of the connectedness locus $\mathcal{M}_n$, similar to the Mandelbrot set, defined as the set of those $c$ for which $E(c,n)$ is connected. In particular, we provide a detailed answer to an open question posed by Calegari, Koch, and Walker in 2017. We also extend and refine the technique of the ``covering property'' by Solomyak and Xu to any $n\ge2$. We use it to show that a nontrivial portion of $\mathcal{M}_n$ is regular-closed. When $n\ge21$, we enhance this result by showing that, in fact, the whole $\mathcal{M}_n\setminus\mathbb{R}$ lies within the closure of its interior, thus proving that the generalized Bandt's conjecture is true.
\end{abstract}

\keywords{Collinear fractals, Bandt's conjecture, connectedness locus, iterated function systems, Mandelbrot set, roots of integer polynomials}

\maketitle

\thispagestyle{empty}

\section{Introduction}\label{sec1}
Polynomials with integer coefficients are fundamental objects in mathematics, serving as building blocks in various areas such as algebra, number theory, and geometry. The roots of these polynomials often exhibit fascinating and intricate patterns in the complex plane. In this paper, we uncover a deep connection between these roots and a class of self-similar sets that we call \textit{collinear fractals}. These fractals are generated by repeatedly applying simple mathematical transformations involving a complex parameter $c$ and an integer $n$. Exotic elements of the family include some self-affine tiles with a collinear digit set independently studied in~\cite{Akiyama2021}. Remarkably, the set $\mathcal{M}_n$ of parameters $c$ for which the corresponding fractal is connected can be identified with the set of roots of polynomials with integer coefficients restricted from $-n+1$ to $n-1$.  By exploring this bridge between algebra and geometry, we provide new insights into long-standing mathematical questions, demonstrating how the algebraic properties of polynomials shape the geometric structure of fractals and give rise to complex and beautiful sets.

The concept of visualizing roots of polynomials is not new, and numerous mathematical explorations have arisen from this idea, particularly in blogs and online mathematical discussions~\cite{Christensen2006,Baez2023}. Research on the so-called Littlewood polynomials, whose coefficients are $\pm 1$, produced some of the earliest $\mathcal{M}_n$-like imagery; see, for instance, the work of Peter and Jonathan Borwein~\cite{Bailey2007,Borwein2008}. Similarly, polynomials with coefficients restricted to $\{0, 1\}$, known as Newman polynomials, were thoroughly investigated in the seminal work of Odlyzko and Poonen~\cite{Odlyzko1993}. Other studies related to roots of polynomials include the \textit{Thurston’s Master Teapot}~\cite{Bray2021,Lindsey2022,LindseyTiozzoWu2024}, \textit{Algebraic Number Starscapes}~\cite{Harriss2022,Dorfsman22}, and the eigenvalues of \textit{Bohemian Matrices}~\cite{Sendra2021}.

For any integer $n \geq 2$, let $A_n$ be the set
\[
A_n := \{ n+1 - 2k \}_{k = 1}^n= \{ - n+1, - n+3, \dots, n - 3, n - 1 \}.
\]
Let $\disk := \{ z \in \mathbb{C} : |z| \leq 1 \}$ denote the closed unit disk. For any parameter $c\in\mathbb{C}\setminus\disk$, consider the \emph{iterated function system} (IFS) $\{\mbox{$f_t \colon \mathbb{C} \longrightarrow \mathbb{C}$}\}_{t\in A_n}$, where $f_t(z):=t+c^{-1}z$. The attractor $E(c,n)$ of this IFS is the unique nonempty compact set satisfying
\begin{equation}\label{eq:attractor}
E(c,n) = \bigcup_{t \in A_n} \left( t + \frac{E(c,n)}{c} \right).
\end{equation}

These sets $E(c,n)$, which we refer to as \textit{collinear fractals}, are fundamental examples of self-similar sets in the plane, and understanding their topological and fractal properties is of significant interest. For a geometric description of $E(c,n)$, label the $n$ first-level \emph{pieces} of $E(c,n)$ as
\begin{equation*}
   E_t(c,n)=f_{t}(E(c,n))=t+c^{-1}E(c,n), \mbox{ where } t\in A_n.
\end{equation*}
For $n$ odd, the central piece is $E_0(c,n)=c^{-1}E(c,n)$, a copy of $E(c,n)$ centered at $0$, scaled down by $|1/c|$ and rotated by $\arg(1/c)$. The neighboring pieces of $E_0(c,n)$ are $E_{-2}(c,n)$ and $E_2(c,n)$.
For $n$ even, $0\notin A_n$ and the central pieces are $E_{-1}(c,n)$ and $E_1(c,n)$, with centers at $-1$ and $1$ respectively. Each piece $E_t(c,n)$ is just a translated copy of $c^{-1}E(c,n)$, and with the exception of $E_0(c,n)$ when $n$ is odd, each piece $E_t(c,n)$ comes with an identical pair $E_{-t}(c,n)$ symmetrically centered on the opposite side of the real line. See Figures~\ref{fig:Ec4_example}~and~\ref{fig:Ec5_example}.

\begin{figure}
    \centering
\includegraphics[width=.8\linewidth]{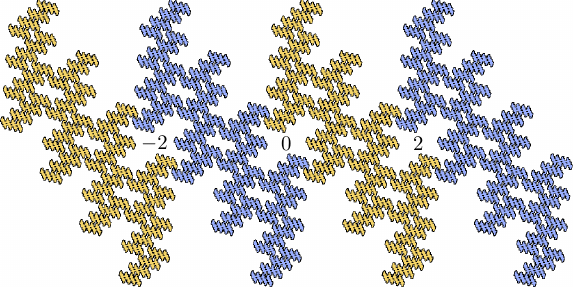}
    \caption{Example of a collinear fractal $E(c,4)$ for a specific parameter $c=(3 + i \sqrt{11})/2$, illustrating the intersection of neighboring pieces centered at $A_4=\{-3,-1,1,3\}$. By symmetry, the three main components of $\mathbb{C}\setminus E(c,4)$ are centered at $A_3=\{-2,0,2\}$.}
    \label{fig:Ec4_example}
\end{figure}

\begin{figure}
\centering
\includegraphics[width=\linewidth]{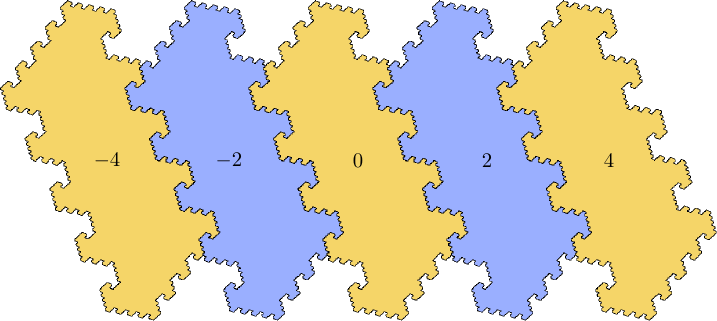}
\caption{Plane-filling collinear fractal $E(c,5)$ with $c=1 + 2i$. The first-level pieces are centered at $A_5= \{-4,-2,0,2,4\}$.
    }
    \label{fig:Ec5_example}
\end{figure}

A natural question is whether the attractor $E(c,n)$ is connected or completely disconnected. For IFS consisting of two maps ($n = 2$), there is a well-known dichotomy: the attractor is connected if and only if the images of the attractor under the two maps overlap, i.e., the two pieces $(-1 + c^{-1} E(c,2))$ and $(1 + c^{-1} E(c,2))$ intersect.

For a general IFS with $n>2$ contractions, this dichotomy does not hold in general: see, for instance, some examples of homogeneous self-similar sets in \cite{Solomyak1998MeasureFamilies}. However, for the collinear fractals $E(c,n)$ considered here, we have a similar criterion: we will show (Proposition~\ref{delicat}) that $E(c,n)$ is connected if and only if neighboring pieces on the right-hand side of (\ref{eq:attractor}) intersect. 

The \emph{connectedness locus} $\mathcal{M}_n$ is defined as the set of parameters $c$ for which $E(c,n)$ is connected:
\[
\mathcal{M}_n := \{ c \in \mathbb{C} \setminus \disk : E(c,n) \text{ is connected} \}.
\]
In the last decades, considerable efforts have been dedicated to understanding the topological properties of the set $\mathcal{M}_2$. It is worth noticing that, traditionally, the open unit disk has been the preferred parameter space for studying $\mathcal{M}_2$. However, using its geometric inversion $c\mapsto 1/c$ is advantageous to clarify the boundary of $\mathcal{M}_n$ and simplify the geometric arguments.

In 2008, Bandt and Hung \cite{Bandt2008FractalSets} introduced a family of self-similar sets known as \emph{$n$-gon fractals} for $n\in \mathbb{N}$ with $n\ge 2$ and parameterized by $\lambda\in\Int(\disk)\setminus\{0\}$, which reduce to our collinear fractals $E(1/\lambda,2)$ when $n=2$. The authors proved that the corresponding connectedness loci $\mathscr{M}_n$ are regular-closed for all $n\ne 4$:
\[
\clos(\Int(\mathscr{M}_n))=\mathscr{M}_n\ \mbox { for $n\ge3$ with $n\ne 4$.} 
\]

The regular-closedness of $\mathscr{M}_4$ was proved in 2020 by Himeki and Ishii \cite{Himeki2020M4Regular-closed} by extending some techniques from~\cite{Calegari2017RootsConjecture}. The extreme points of the $n$-gon fractals were characterized by Calegari and Walker in  \cite{Calegari2019ExtremeSets}. Bousch \cite{Bousch1992SurHolomorphe} proved that $\mathscr{M}_n$ is connected for any $n\ge 3$. The same author \cite{Bousch1993ConnexiteFonctions} also showed that $\mathcal{M}_2$ is connected and locally connected. Finally, Nakajima~\cite{Nakajima2024MandelbrotSeries} extended the work of Bousch proving the local connectedness of $\mathscr{M}_n$ for any $n\ge 3$. Therefore, the problems concerning the regular-closedness, connectedness, and local connectedness of $\mathscr{M}_n$ have been fully solved.

Going back to our sets $\mathcal{M}_n$, it turns out that Nakajima's general framework for studying the connectivity of the set of zeros of power series can be directly applied to our families of collinear fractals, which results in the sets $\mathcal{M}_n$ being connected and locally connected for any $n\ge 2$ (Theorem~\ref{thm:MLC} in Section~\ref{sec2}).

This paper focuses on understanding the connectedness loci $\mathcal{M}_n$ for any $n\geq2$.
The case $n=2$, sometimes referred to as the ``Mandelbrot set for a pair of linear maps'', has been extensively studied \cite{ Barnsley1985AMaps, Bousch1993ConnexiteFonctions,
Indlekofer1995OnSystems, Bandt2002OnMaps, Solomyak2003OnConvolutions, Solomyak2004MandelbrotGeometry,Solomyak2005OnSelf-similarity,Shmerkin2006ZerosSets,Bandt2008Self-similarOverlaps, Calegari2017RootsConjecture, Hare2015Two-dimensionalUniqueness, Hare2017OnExpansions, Shmerkin2016AbsoluteConvolutions, Silvestri2023AccessibilitySet}, 
but less is known for $n > 2$. Bandt \cite{Bandt2002OnMaps} conjectured that $\mathcal{M}_2 \setminus \mathbb{R}$ is contained in the closure of its interior, that is, the nonreal part of $\mathcal{M}_2$ is regular-closed. Solomyak and Xu \cite{Solomyak2003OnConvolutions} made significant progress towards this conjecture by showing that a nontrivial portion~$\mathcal{X}_2$ of $\mathcal{M}_2$ near the imaginary axis is the closure of its interior, 
\[
\mathcal{M}_2 \cap \mathcal{X}_2 \subset \clos\left( \Int(\mathcal{M}_2) \right).
\]

For a set $S\subset\mathbb{C}$, its \emph{set of differences} will be denoted as
\[ S\ominus S:=\{ p-q:\ p,q\in S\}. \]

In 2017, Calegari, Koch, and Walker \cite{Calegari2017RootsConjecture} proved Bandt's conjecture by introducing the technique of traps to certify interior points of $\mathcal{M}_2$. The authors also wondered about the properties of $E(c,2^k+1)$ for $k\ge0$ and its set of differences, which turns out to be $E(c,2^{k+1}+1)$, leaving further investigation of $\mathcal{M}_n$ for $n=2^k+1$ as an open problem.
In this paper, we address this not only for $n=2^k+1$ but also for any $n\geq 2$. We extend Solomyak and Xu's covering property lemma to all $n \geq 2$.
Specifically, we improve and adapt the techniques used in \cite{Solomyak2003OnConvolutions} to the general setting. As a result, we obtain that for $n \geq 21$, the covering property lemma suffices to prove the generalized Bandt's conjecture, namely, that $\mathcal{M}_n \setminus \mathbb{R}$ is contained in the closure of its interior.

\section{Preliminaries}\label{sec2}
In this section, we introduce some basic definitions and establish some properties of collinear fractals $E(c,n)$ and their connectedness locus $\mathcal{M}_n$.

\begin{prop}\label{delicat}
For any integer $n\ge2$, $E(c,n)$ is connected if and only if 
\[ \frac{E(c,n)}{c} \cap \left( 2 + \frac{E(c,n)}{c} \right) \neq \emptyset. \]
\end{prop}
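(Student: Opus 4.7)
The key observation is that $E_s\cap E_t\ne\emptyset$ if and only if $c(t-s)\in E(c,n)\ominus E(c,n)$; hence neighboring pieces overlap precisely when $2c\in E(c,n)\ominus E(c,n)$, which is exactly the hypothesis. In particular, since this condition depends only on the difference $t-s$, if one neighboring pair overlaps then every consecutive pair $(E_t, E_{t+2})$ with $t, t+2\in A_n$ overlaps.

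The $(\Leftarrow)$ direction then follows from Hata's theorem on connected attractors: under the overlap hypothesis, the Hata connectivity graph on $A_n$ (with an edge between $s$ and $t$ whenever $E_s\cap E_t\ne\emptyset$) contains the spanning chain $-n+1\sim -n+3\sim\cdots\sim n-1$, so it is connected and therefore $E(c,n)$ is connected. For $(\Rightarrow)$ I would argue by contrapositive: assume neighboring pieces are all disjoint and that $E:=E(c,n)$ is connected, aiming at a contradiction. Exploiting the central symmetry $E=-E$ (inherited from $A_n=-A_n$) and the connectedness of $E/c$, the real-axis projection $J:=\{\mathrm{Re}(z/c):z\in E\}$ of $E/c$ is a compact interval symmetric about $0$, say $J=[-\ell,\ell]$. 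Since $P_x(E)=\bigcup_{t\in A_n}(t+J)$ is connected as the continuous image of $E$, consecutive translates $[t-\ell,t+\ell]$ and $[t+2-\ell,t+2+\ell]$ must overlap, forcing $\ell\ge 1$; that is, neighboring pieces do have overlapping real projections.

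The main obstacle is upgrading this projection-level overlap to genuine set-level overlap: two planar compact sets can have intersecting real projections while being vertically separated, and this is what must be ruled out to reach a contradiction. My plan is to iterate the self-similar structure. For each word $w$ of length $k$, the sub-pieces $f_w(E(c,n)/c)$ and $f_w(2+E(c,n)/c)$ of $E_0$ and $E_2$ respectively are rescaled-by-$|c|^{-k}$ copies of the original neighboring pair, and re-running the projection argument at each level propagates the ``covering'' constraint to arbitrarily fine scales. A Cantor-style nested compact-set argument then produces a point common to $E_0$ and $E_2$, contradicting the disjointness assumption. The chief technical hurdle I expect is making this iteration precise: tracking how the connectedness of the projected union descends through the IFS rescaling, and verifying that specifically \emph{consecutive} sub-pieces (not merely some pair) inherit the projection-overlap at each level, which is exactly where the collinear structure of the IFS should come to bear.
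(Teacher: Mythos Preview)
Your $(\Leftarrow)$ direction via Hata's criterion is correct and matches the paper exactly.

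For $(\Rightarrow)$ the paper takes a much shorter route that you bypass entirely. It invokes the full biconditional of the Hata--Bandt--Graf criterion (the attractor is connected \emph{if and only if} the connectivity graph is connected) and asserts a dichotomy on the graph $G$: either $G$ contains every neighbor-edge $\{i,i+2\}$ (when the hypothesis holds), or $G$ has no edges whatsoever (when it fails). Granting this, $(\Rightarrow)$ is immediate: no neighbor-overlap $\Rightarrow$ edgeless graph $\Rightarrow$ $G$ disconnected $\Rightarrow$ $E(c,n)$ disconnected. So the paper's argument never touches projections or iteration; it reduces everything to the claim that overlap of \emph{some} pair $E_s,E_t$ forces overlap of \emph{consecutive} pairs---a strengthening of the translation-invariance observation you made in your first paragraph, which the paper records as an ``observe'' step.

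Your own plan for $(\Rightarrow)$ has a genuine gap. The projection bound $\ell\ge1$ is fine, but the iteration does not produce a contradiction as described. First, your identification of $f_w(2+E/c)$ as a sub-piece of $E_2$ is wrong: since $f_w$ has linear part $z\mapsto c^{-k}z$, one has $f_w(2+E/c)=f_w(E/c)+2c^{-k}$, which is a small translate of a sub-piece of $E_{w_1}$ and lands in no first-level piece in general. Second, and more fundamentally, you are \emph{assuming} $E_0\cap E_2=\emptyset$, so $\mathrm{dist}(E_0,E_2)>0$; no pair of nested sequences, one inside $E_0$ and one inside $E_2$, can converge to a common point, so the ``Cantor-style'' extraction is blocked at the outset. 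Third, the projection argument does not iterate verbatim: at the next level the sub-pieces of $E/c$ are centered along the direction of $c^{-1}$, not along $\mathbb{R}$, so there is no obvious way the constraint $\ell\ge1$ propagates. The clean fix is to drop the projection strategy and argue, as the paper does, directly on the connectivity graph using the converse direction of Hata's theorem.
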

\begin{proof}
It is immediate that the condition $(E(c,n)/c) \cap (2+E(c,n)/c)\neq\emptyset$ 
is equivalent to the property that, for any pair $\{t,t+2\}\subset A_n$, 
\begin{equation}\label{etiq} \left(t+\frac{E(c,n)}{c}\right) \cap \left( t+2 + \frac{E(c,n)}{c} \right) \neq \emptyset. \end{equation}
In other words, that any two neighboring pieces have a nonempty intersection.

Consider the \emph{connectivity graph} $G$, a combinatorial graph whose vertices are the $n$ elements of $A_n$ and there is an edge connecting a pair of vertices $\{i,j\}$ if and only if the corresponding first-level pieces $i+E(c,n)/c$ and $j+E(c,n)/c$ intersect. Now observe that either $G$ contains all the edges $\{i,i+2\}$ for $i=-n+1,-n+3,\ldots,n-3$ if (\ref{etiq}) holds, or $G$ is a collection of $n$ singletons (with no edges) otherwise. So, the proposition follows from the well-known fact that a self-similar set is connected if and only if the corresponding connectivity graph is connected \cite{Bandt1991Self-SimilarFractals,Hata1985OnSets}.
\end{proof}

An explicit representation of $E(c,n)$ is well known \cite{Solomyak1998MeasureFamilies} and is given by
\begin{equation}\label{eq:Ecn_explicit}
E(c,n) = \left\{ \sum_{k=0}^\infty a_k c^{-k} : a_k \in A_n \right\}.
\end{equation}

From (\ref{eq:Ecn_explicit}) and the fact that $A_n\ominus A_n=A_{2n-1}$ it easily follows that the set of differences of $E(c,n)$ is

\begin{equation}\label{dif}
E(c,n) \ominus E(c,n) = E(c, 2n - 1).
\end{equation}

Define now the set
\begin{equation}\label{eq:Dn}
D_n := \frac{1}{2} A_{2n - 1} =\left\{ 1-n, 2-n, \dots, -1, 0, 1, \dots, n-2, n-1 \right\}.
\end{equation}

\begin{figure}
    \centering
\includegraphics[width=\linewidth]{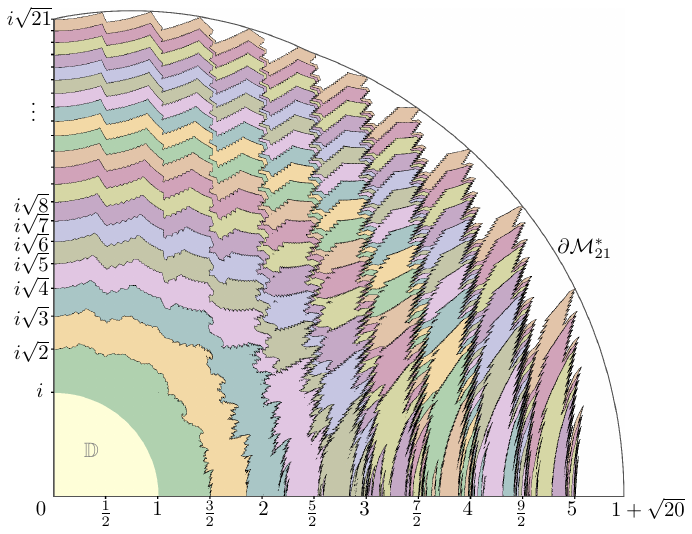}
    \caption{Superimposed arrangement of $\mathcal{M}_2$, $\mathcal{M}_3$, \dots, $\mathcal{M}_{21}$ constrained within the upper-right section of the complex plane. From Proposition~\ref{prop:Mn_nested} we know that the connectedness loci are nested. The illustration suggests the existence of infinitely many holes, and that, for $n\geq 4$, the intersection $\partial \mathcal{M}_n\cap\partial \mathcal{M}_{n+1}\setminus\mathbb{R}$ is nonempty.
    }
    \label{fig:Mn2to21}
\end{figure}

\begin{figure}
    \centering
\includegraphics[width=.9\linewidth]{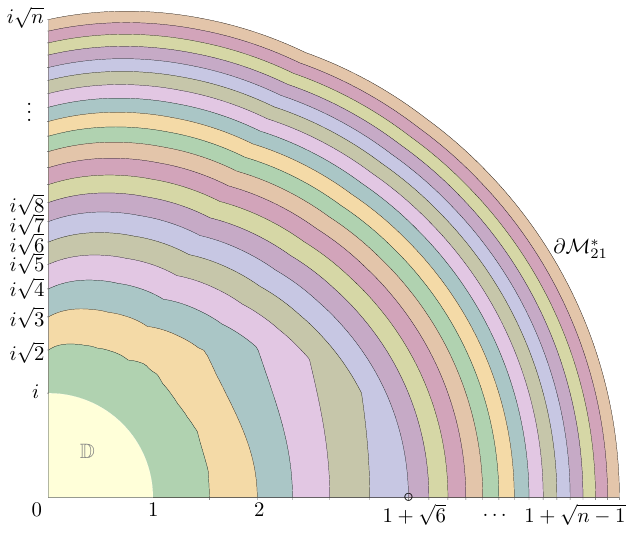}
    \caption{Convexity sets $\mathcal{M}_2^*$, $\mathcal{M}_3^*$, \dots, $\mathcal{M}_{21}^*$ constrained within the upper-right section of the complex plane. From Proposition~\ref{prop:upperbound} we know that $\mathcal{M}_n^*\setminus\mathbb{R}\subset B(1+\sqrt{n-1},0)$.
    }
    \label{fig:Mnstar2to21}
\end{figure}

Using these definitions, we can characterize the connectedness locus $\mathcal{M}_n$ as follows.

\begin{prop}\label{prop:Mn_characterization}
For any integer $n\ge2$,
\[ \mathcal{M}_n = \left\{ c \in \mathbb{C} \setminus \disk : \exists\ a_1, a_2,\ldots \in D_n\mbox{ such that } 1+ \sum_{k=1}^\infty a_kc^{-k} = 0 \right\}. \]
\end{prop}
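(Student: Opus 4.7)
The plan is to chain three already-available facts: Proposition~\ref{delicat}, the difference identity (\ref{dif}), and the series representation (\ref{eq:Ecn_explicit}), and then perform a purely algebraic rearrangement to bring the resulting identity into the stated normalized form. The main obstacle will not be conceptual but purely bookkeeping: one has to shift the summation index, divide through by $c$, and exploit the symmetry $D_n=-D_n$ to move all nonconstant terms into a sum with coefficients in $D_n$ and a leading $1$.

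First, I would observe that the condition in Proposition~\ref{delicat}, namely $(E(c,n)/c)\cap(2+E(c,n)/c)\neq\emptyset$, is equivalent to the existence of $z_1,z_2\in E(c,n)$ with $z_1/c=2+z_2/c$, i.e.\ $z_1-z_2=2c$. Hence $c\in\mathcal{M}_n$ if and only if $2c\in E(c,n)\ominus E(c,n)$. Applying the identity (\ref{dif}) rewrites this as $2c\in E(c,2n-1)$.

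Next, I would invoke (\ref{eq:Ecn_explicit}) to unfold membership in $E(c,2n-1)$: there exist $b_0,b_1,b_2,\ldots\in A_{2n-1}$ with
\[
2c=\sum_{k=0}^{\infty} b_k c^{-k}.
\]
Dividing both sides by $2c$ and re-indexing yields
\[
1=\sum_{k=0}^{\infty}\frac{b_k}{2}\,c^{-k-1}=\sum_{k=1}^{\infty}\frac{b_{k-1}}{2}\,c^{-k}.
\]
By the very definition (\ref{eq:Dn}), $\tfrac{1}{2}A_{2n-1}=D_n$, so setting $\widetilde a_k:=b_{k-1}/2$ gives coefficients in $D_n$ with $1=\sum_{k\ge1}\widetilde a_k c^{-k}$.

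Finally I would use that $D_n$ is symmetric about the origin, so $a_k:=-\widetilde a_k\in D_n$, and the relation becomes $1+\sum_{k=1}^{\infty} a_k c^{-k}=0$, which is exactly the asserted form. Running the same argument in reverse (starting from such a power-series identity, multiplying by $-2c$, and splitting each $2a_k\in A_{2n-1}$ as a difference of two elements of $A_n$ via (\ref{dif}) and (\ref{eq:Ecn_explicit})) recovers a pair of points in $E(c,n)$ whose difference is $2c$, thereby establishing the reverse implication and completing the proof.
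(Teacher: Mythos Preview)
Your proof is correct and follows essentially the same approach as the paper: chain Proposition~\ref{delicat}, the difference identity~(\ref{dif}), and the series representation~(\ref{eq:Ecn_explicit}), then normalize. The only cosmetic difference is that the paper keeps the factor $1/c$ on the attractor side, writing $0\in 2+E(c,2n-1)/c$ so the series starts at $k=1$ immediately and no sign flip is needed; your route via $2c\in E(c,2n-1)$ forces an index shift and an appeal to $D_n=-D_n$, but this is pure bookkeeping, not a different idea.
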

\begin{proof}
From Proposition~\ref{delicat} and the characterization (\ref{eq:Ecn_explicit}) of $E(c,n)$ it follows that $E(c,n)$ is connected if and only if
\[
0 \in 2 + \left( \frac{E(c,n)}{c} \ominus \frac{E(c,n)}{c}\right) = 2 + \frac{E(c,2n - 1)}{c},
\]
where in the last equality we have used (\ref{dif}). This equality, again using (\ref{eq:Ecn_explicit}), implies that there exist $b_k \in A_{2n-1}$ and $a_k = b_k/2 \in D_n$ such that
\[
0 = 2 + \sum_{k=1}^\infty b_k c^{-k}=2\biggl(1 + \sum_{k=1}^\infty a_k c^{-k}\biggr).
\]
\end{proof}

The subsequent result is a direct consequence of Proposition~\ref{prop:Mn_characterization} and the straightforward observation that $D_n\subset D_{n+1}$. See Figure~\ref{fig:Mn2to21}.

\begin{prop}\label{prop:Mn_nested}
$\mathcal{M}_n\subset \mathcal{M}_{n+1}$ for any integer $n\ge2$.
\end{prop}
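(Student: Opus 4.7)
The proposition is a direct corollary of the explicit characterization of $\mathcal{M}_n$ established in Proposition~\ref{prop:Mn_characterization}, together with the elementary set-theoretic observation $D_n\subset D_{n+1}$. The plan is therefore to simply unfold the definitions and invoke the characterization.

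First I would verify the containment of digit sets. Recall from \eqref{eq:Dn} that
\[ D_n=\{1-n,2-n,\dots,-1,0,1,\dots,n-2,n-1\} \]
and similarly $D_{n+1}=\{-n,1-n,\dots,-1,0,1,\dots,n-1,n\}$. Every element of $D_n$ lies in $D_{n+1}$, so $D_n\subset D_{n+1}$.

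Next, let $c\in\mathcal{M}_n$. By Proposition~\ref{prop:Mn_characterization}, there exist digits $a_1,a_2,\ldots\in D_n$ such that
\[ 1+\sum_{k=1}^\infty a_k c^{-k}=0. \]
Since $D_n\subset D_{n+1}$, the same sequence $(a_k)_{k\geq 1}$ is admissible as a sequence of digits in $D_{n+1}$. Applying the characterization of Proposition~\ref{prop:Mn_characterization} again, this time with $n$ replaced by $n+1$, we conclude $c\in\mathcal{M}_{n+1}$, which is exactly the claim.

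There is essentially no obstacle to this argument; the only thing to check is that the characterization of $\mathcal{M}_n$ given by Proposition~\ref{prop:Mn_characterization} depends on $n$ only through the digit set $D_n$, which is manifest from its statement. All the heavy lifting has already been done in the proof of Proposition~\ref{prop:Mn_characterization}, so this result costs nothing beyond the display above.
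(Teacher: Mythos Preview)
Your argument is correct and matches the paper's own justification exactly: the paper states that the result is a direct consequence of Proposition~\ref{prop:Mn_characterization} together with the observation that $D_n\subset D_{n+1}$.
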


In order to estimate a bounding region for $\mathcal{M}_n$, we will use a deep connection between $\mathcal{M}_n$ and the set of zeros of a larger class of power series investigated earlier by Beaucoup et al.~\cite{Beaucoup1998PowerRay}. Let $I_n:=[1-n,n-1]\subset\mathbb{R}$ be the convex hull of $D_n$. We define the \emph{convexity set} as
\begin{equation}\label{eq:Mnstar}
\mathcal{M}_n^* := \left\{ c \in \mathbb{C} \setminus \disk \colon \exists\   a_1, a_2, \ldots \in I_n \mbox{ such that } \ 1 + \sum_{k=1}^\infty a_k c^{-k} = 0 \right\}.
\end{equation}
In Figure~\ref{fig:Mnstar2to21} we have represented the convexity sets $\mathcal{M}_i^*$ for $2\le n\le 21$.

In what follows, the closed disk of radius $r$ centered at $z_0$ will be denoted by $B(r,z_0)$. 

\begin{prop}\label{prop:upperbound}  
For any integer $n\ge2$,
\[ \mathcal{M}_n\subset\mathcal{M}_n^*\subset B(1 + \sqrt{n - 1},0)\cup [-n,n]. \]
\end{prop}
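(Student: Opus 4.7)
The plan is to establish the two inclusions separately. For $\mathcal{M}_n \subseteq \mathcal{M}_n^*$, the containment is immediate from Proposition~\ref{prop:Mn_characterization}, since $D_n \subseteq I_n$: any sequence $(a_k) \in D_n^{\mathbb{N}}$ witnessing $c \in \mathcal{M}_n$ also witnesses $c \in \mathcal{M}_n^*$.

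For the second inclusion $\mathcal{M}_n^* \subseteq B(1+\sqrt{n-1},0) \cup [-n,n]$, I would split according to whether $c$ is real or not. The real case is elementary: if $c \in \mathcal{M}_n^*$, the triangle inequality applied to the defining identity gives
\[
1 = \left|\sum_{k \ge 1} a_k c^{-k}\right| \le \frac{n-1}{|c|-1},
\]
yielding $|c| \le n$, so restricted to $c \in \mathbb{R}$ we obtain $c \in [-n,n]$. This handles the second term in the union.

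The real work lies in the non-real case. Set $w_0 = 1/c$ with $\operatorname{Im}(w_0) \ne 0$; then $g(w) := 1 + \sum_{k \ge 1} a_k w^k$ satisfies $g(w_0) = 0$. Because the $a_k$ are real, one also has $g(\bar w_0) = 0$, so we may factor
\[
g(w) = \bigl(1 - w/w_0\bigr)\bigl(1 - w/\bar w_0\bigr)\, h(w),
\]
with $h$ analytic on $|w|<1$ and $h(0)=1$. Expanding the factorization produces the recurrence $a_k = h_k - 2\operatorname{Re}(c)\, h_{k-1} + |c|^2\, h_{k-2}$ for $k \ge 2$. Combining $|a_k| \le n-1$ with the fact that $h$ being analytic on the open unit disk forces $\limsup |h_k|^{1/k} \le 1$, a careful growth analysis yields the lower bound $|w_0| \ge 1/(1+\sqrt{n-1})$, equivalently $|c| \le 1+\sqrt{n-1}$. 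This is essentially the method of Beaucoup et al.~\cite{Beaucoup1998PowerRay}, and the technical heart---extracting the precise constant $1+\sqrt{n-1}$ from the recurrence constrained by $|a_k| \le n-1$ (rather than a weaker estimate like $(1+\sqrt{4n-3})/2$ that comes from peeling only one term)---is the main obstacle of the proof.
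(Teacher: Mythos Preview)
Your proposal is correct and follows essentially the same route as the paper: the first inclusion is immediate from $D_n\subset I_n$, and the second inclusion is obtained by invoking the results of Beaucoup et al.~\cite{Beaucoup1998PowerRay}. The only cosmetic difference is that the paper cites their Theorems~A and~B directly as black boxes, whereas you redo the real case via the elementary triangle-inequality bound $|c|\le n$ and, for the non-real case, sketch the conjugate-pair factorization and recurrence underlying their Theorem~B before (rightly) deferring the extraction of the sharp constant $1+\sqrt{n-1}$ to their analysis.
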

\begin{proof}
The inclusion $\mathcal{M}_n\subset\mathcal{M}_n^*$ comes directly from the definition (\ref{eq:Mnstar}), the fact that $D_n\subset I_n$ and Proposition~\ref{prop:Mn_characterization}. 

Let us now see that $\mathcal{M}_n^*\cap\mathbb{R}\subset[-n,n]$. In \cite{Beaucoup1998PowerRay}, the authors study the set $Z_g$ of all zeros with an absolute value smaller than 1 of power series of the form
\[ 1+\sum_{k=0}^\infty a_kz^k, \mbox{ where }a_k\in[-g,g]\]
for a positive real number $g$. Note that, by definition (\ref{eq:Mnstar}), $\mathcal{M}_n^*$ is nothing more than the inversion of the set $Z_g$ when $g=n-1$. Let $r_g(\phi)$ be the infimum of the moduli of all numbers in $Z_g$ with a fixed argument $\phi$. Theorem~A of \cite{Beaucoup1998PowerRay} states that $r_g(\phi)\ge 1/(g+1)$, with the equality achieved when $\phi=0$. By inversion and taking $g=n-1$, this proves that $\mathcal{M}_n^*\cap\mathbb{R}\subset[-n,n]$. 

On the other hand, Theorem~B of \cite{Beaucoup1998PowerRay} states that $r_g(\phi)>1/(\sqrt{g}+1)$ for all $\phi\in(0,\pi)$. Taking inverses and setting $g=n-1$, this implies that $\mathcal{M}_n^*\setminus\mathbb{R}\subset B(1+\sqrt{n-1},0)$. 
\end{proof}

Additional bounds for $\mathcal{M}_n$ are provided by the following result, which corresponds to Lemma~2.5 of \cite{Solomyak1998MeasureFamilies}.

\begin{prop}\label{prop:trivial} For any integer $n\ge2$, 
\begin{enumerate}
\item[(i)] $\{ c\in\mathbb{C} : 1<|c|<\sqrt{n}\}\subset\mathcal{M}_n$
\item[(ii)] $(-n,-1)\cup(1,n)\subset\mathcal{M}_n\cap\mathbb{R}$.
\end{enumerate}
\end{prop}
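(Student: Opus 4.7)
The plan is to reduce both parts, via Proposition~\ref{delicat} combined with the identity~(\ref{dif}), to the single condition $-2c\in E(c,2n-1)$: indeed, $E(c,n)$ is connected iff $(E(c,n)/c)\cap(2+E(c,n)/c)\ne\emptyset$, iff $0\in 2+E(c,2n-1)/c$, iff $-2c\in E(c,2n-1)$.

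For part~(ii), I would dispatch the real case by direct computation. For $c\in(1,n)$, set $L=(2n-2)c/(c-1)$, the maximum modulus of any point in $E(c,2n-1)$, achieved by the greedy series $\sum_{k=0}^\infty(2n-2)c^{-k}$. I would then show $E(c,2n-1)=[-L,L]$ via the Hutchinson operator $T(X)=\bigcup_{t\in A_{2n-1}}(t+X/c)$: applied to $[-L,L]$ it yields $2n-1$ subintervals of half-length $L/c=(2n-2)/(c-1)$ centered at $t\in A_{2n-1}$ (spacing $2$), which overlap whenever $L/c\ge 1$, equivalently $c\le 2n-1$; this holds since $c<n\le 2n-1$, so the union is $[-L,L]$. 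By uniqueness of the attractor $E(c,2n-1)=[-L,L]$, and the containment $-2c\in[-L,L]$ reduces to $2c\le L$, i.e.\ $c\le n$, our assumption. The case $c\in(-n,-1)$ follows from the symmetry $\mathcal{M}_n=-\mathcal{M}_n$, immediate from $A_n=-A_n$.

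For part~(i), the complex case demands a finer argument. The key observation is that if $B(z_0,r)\subset E(c,n)$, then by~(\ref{dif}) and the set-difference identity $B(z_0,r)\ominus B(z_0,r)=B(0,2r)$ one has $B(0,2r)\subset E(c,2n-1)$; this contains $-2c$ whenever $r\ge|c|$. So it suffices to inscribe a disk of radius $|c|$ in $E(c,n)$. Since $|c|^2<n$, the similarity dimension $\log n/\log|c|$ of $E(c,n)$ strictly exceeds $2$, and a classical Solomyak-type argument then gives that $E(c,n)$ has positive planar Lebesgue measure. Starting from a Lebesgue density point of $E(c,n)$ and applying sufficiently many backward iterates $f_{t_1}^{-1}\circ\cdots\circ f_{t_k}^{-1}$, each of which expands by $|c|$, one transfers density-$1$ neighbourhoods into the required inscribed disk, using the dimensional gap $n-|c|^2>0$ to control the propagation of the ``nearly full'' sets.

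The hardest step will be the quantitative passage from positive Lebesgue measure to the explicit lower bound $r\ge|c|$ on the inscribed-disk radius. A purely qualitative density argument only produces some unspecified $r>0$; only a sharp balancing of the iteration depth against the dimensional slack $n-|c|^2$—performed carefully in Lemma~2.5 of~\cite{Solomyak1998MeasureFamilies}, which we cite for this step—yields the bound $r\ge|c|$ uniformly over the annulus $1<|c|<\sqrt{n}$.
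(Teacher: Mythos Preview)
The paper does not supply its own proof of this proposition; it simply records that the result ``corresponds to Lemma~2.5 of \cite{Solomyak1998MeasureFamilies}''. So the comparison is between your proposal and the bare citation.

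Your treatment of part~(ii) is a clean, correct, self-contained argument. The verification that $[-L,L]$ with $L=(2n-2)c/(c-1)$ is invariant under the Hutchinson operator for $E(c,2n-1)$ when $1<c\le 2n-1$, followed by the reduction $2c\le L\iff c\le n$, is exactly right, and the symmetry $\mathcal{M}_n=-\mathcal{M}_n$ handles the negative interval. This is more than the paper itself offers.

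Your sketch of part~(i), however, has a genuine gap. The assertion that ``similarity dimension exceeds $2$, and a classical Solomyak-type argument then gives that $E(c,n)$ has positive planar Lebesgue measure'' is not a theorem: for overlapping self-similar sets there is no general implication from similarity dimension $>2$ to positive Lebesgue measure. Results of Solomyak and others in this direction are almost-everywhere statements in the parameter, not pointwise ones, and would not give the claimed inclusion for \emph{every} $c$ in the open annulus. Even granting positive measure, the passage from a Lebesgue density point to an inscribed disk of the specific radius $|c|$ via ``backward iterates'' is not an argument: inverse branches $f_t^{-1}$ need not carry subsets of $E(c,n)$ into $E(c,n)$, and there is no mechanism in your sketch that turns density~$1$ into a genuine ball inside the attractor. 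In the end you defer to the same Lemma~2.5 you are meant to be proving, so the sketch adds no independent content for~(i).

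The route actually taken in \cite{Solomyak1998MeasureFamilies} (and implicitly in this paper's later Lemma~\ref{lem-comput}) is not measure-theoretic at all: one exhibits an explicit compact region $F$ (a suitably chosen parallelogram or rectangle depending on $c$) satisfying the covering inclusion $F\subset\bigcup_{t}(t+F/c)$, whence $F\subset E(c,2n-1)$ by Lemma~\ref{lem:attractor_inclusion}, and then checks directly that $2c\in F$ whenever $|c|^2<n$. If you want a self-contained proof of~(i), that is the line to pursue; the Lebesgue-measure detour is both unnecessary and, as stated, unjustified.
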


By Propositions~\ref{prop:upperbound} and~\ref{prop:trivial}(i), the boundary of $\mathcal{M}_n$ lies \emph{essentially} in the closed annulus $\left\{ c \in \mathbb{C} : \sqrt{n}\leq|c| \leq 1 + \sqrt{n - 1} \right\}$. In fact, the contention is not strict due to the \emph{antennae} given by Proposition~\ref{prop:trivial}(ii). This peculiar feature of the sets $\mathcal{M}_n$ restricted to the real axis has been previously described for $n=2$ in \cite{Barnsley1985AMaps,Bandt2002OnMaps, Shmerkin2006ZerosSets, Calegari2017RootsConjecture}. See Figure~\ref{fig:M8} for a representation of the connectedness locus $\mathcal{M}_8$ with the interval $(1+\sqrt{7},8]$ removed from the positive real antenna. 

Finally, as mentioned in Section~\ref{sec1}, Nakajima's study of the set of zeros of power series \cite{Nakajima2024MandelbrotSeries} can be applied to our families of collinear fractals. Specifically, in view of Proposition~\ref{prop:trivial}(ii) and taking into account that $1\in D_n$, we can use Theorem~B of \cite{Nakajima2024MandelbrotSeries} with $L=1/\sqrt{n}$ to obtain the following result. 

\begin{theo}\label{thm:MLC}
$\mathcal{M}_n$ is connected and locally connected for any integer $n\ge2$.
\end{theo}

\begin{figure}
    \centering
\includegraphics[width=.9\linewidth]{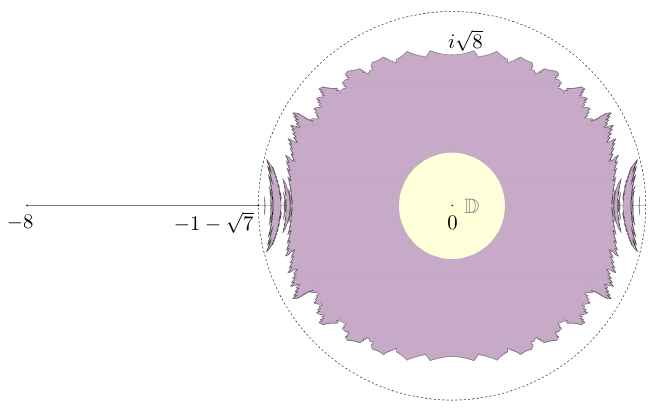}
    \caption{The connectedness locus $\mathcal{M}_8$. From Proposition~\ref{prop:upperbound} we know that $\mathcal{M}_8\setminus\mathbb{R}$ is contained in a disk of radius $1+\sqrt{7}$.}
    \label{fig:M8}
\end{figure}

\section{Statement of the main result}\label{sec3}
To state the main result of this paper, we need to define a particular region $\mathcal{X}_n\subset\mathbb{C}$ as follows. 
Inspired by the methods in \cite{Solomyak2003OnConvolutions}, we define the set $\mathcal{X}_n$ as
\begin{equation}\label{eq:Omega_n}
\begin{aligned}
\mathcal{X}_n :&=\biggl\{ c=re^{i\theta}\in\mathbb{C}\setminus\R:1<r\leq \sqrt{2 n-\sin ^2 \theta }-| \cos \theta |\biggr\}\\
&= \left\{ c\in \mathbb{C} \setminus \bigl(\R\cup\disk\bigr) : |c + 1| \leq \sqrt{2n},\ |c - 1| \leq \sqrt{2n} \right\} \\
&=\biggl(B\bigl(\sqrt{2n},-1\bigr)\cap B\bigl(\sqrt{2n},1\bigr)\biggr)\setminus \bigl(\R\cup\disk\bigr).
\end{aligned}
\end{equation}
See Figure~\ref{fig:Omega_n} for an illustration of the region $\mathcal{X}_8$.

\begin{MainTheorem}\label{thm:main}
$\mathcal{M}_n \cap \mathcal{X}_n \subset \clos\left( \Int(\mathcal{M}_n) \right)$ for any integer $n\ge2$. Moreover, if $n\ge21$, then $\mathcal{M}_n \cap \mathcal{X}_n = \mathcal{M}_n \setminus \mathbb{R}$.
\end{MainTheorem}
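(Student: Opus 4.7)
My plan handles Theorem \ref{thm:main} in two parts: the deep inclusion $\mathcal{M}_n \cap \mathcal{X}_n \subset \clos(\Int(\mathcal{M}_n))$ rests on a covering lemma generalizing Solomyak and Xu's, while the equality $\mathcal{M}_n \cap \mathcal{X}_n = \mathcal{M}_n \setminus \R$ for $n \ge 21$ reduces to a short geometric check using Proposition \ref{prop:upperbound}.

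To prove the first inclusion, I first recast connectivity into a perturbation-friendly form. Combining Proposition \ref{delicat} with the identity $E(c,n) \ominus E(c,n) = E(c, 2n-1)$ from \eqref{dif} yields
\[
c \in \mathcal{M}_n \iff 2c \in E(c, 2n-1),
\]
so the goal becomes: for every $c \in \mathcal{M}_n \cap \mathcal{X}_n$, produce parameters $c'$ arbitrarily close to $c$ for which $2c'$ sits in the interior of $E(c', 2n-1)$, forcing $c' \in \Int(\mathcal{M}_n)$. This is where the covering lemma intervenes. I would prove: for $c \in \mathcal{X}_n$, one can find a finite iteration level $k$ at which the level-$k$ pieces of $E(c, 2n-1)$ overlap tightly enough for the attractor to contain an explicit closed disk of positive radius. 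The key quantitative input is the reformulation of the defining inequality of $\mathcal{X}_n$ as $|c|^2 + 2|\mathrm{Re}(c)| + 1 \le 2n$, which guarantees that two adjacent first-level pieces $t + E(c,n)/c$ and $t+2 + E(c,n)/c$ of $E(c,n)$ overlap by a definite amount; iterating this overlap at $k$ levels produces the disk. The Hausdorff continuity of $c \mapsto E(c, 2n-1)$ then transfers the covering to $c' \approx c$, and a density argument places every $c \in \mathcal{M}_n \cap \mathcal{X}_n$ in $\clos(\Int(\mathcal{M}_n))$.

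For the second clause, the inclusion $\mathcal{M}_n \cap \mathcal{X}_n \subset \mathcal{M}_n \setminus \R$ is immediate from \eqref{eq:Omega_n}, so the task reduces to showing $\mathcal{M}_n \setminus \R \subset \mathcal{X}_n$ when $n \ge 21$. By Proposition \ref{prop:upperbound}, every non-real $c \in \mathcal{M}_n$ satisfies $|c| \le 1 + \sqrt{n-1}$, whence $|c \pm 1| \le 2 + \sqrt{n-1}$ by the triangle inequality. Thus $c \in \mathcal{X}_n$ as soon as $2 + \sqrt{n-1} \le \sqrt{2n}$, and squaring this inequality yields $n^2 - 22n + 25 \ge 0$. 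Its larger root is $11 + \sqrt{96} \approx 20.798$, so $n \ge 21$ is the sharp threshold.

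The main obstacle will be the covering lemma itself. Solomyak and Xu's argument for $n = 2$ is a delicate case analysis of how two overlapping linear contractions fold the plane; scaling up to the $2n-1$ maps of the IFS for $E(c, 2n-1)$ enlarges the combinatorics substantially, and a naive generalization would yield a region significantly smaller than $\mathcal{X}_n$. The fact that the bound $\sqrt{2n}$ in the definition of $\mathcal{X}_n$ is calibrated so that the inequality $2 + \sqrt{n-1} \le \sqrt{2n}$ first holds precisely at $n = 21$ suggests that the covering estimate must be essentially optimal; every piece of slack in the overlap bookkeeping would have to be eliminated through a careful choice of iteration depth and packing pattern. Ensuring uniform control of the overlap on a neighborhood of $c$ in parameter space, so that the covering is genuinely open and produces interior points of $\mathcal{M}_n$, is a further technical demand intrinsic to any such perturbation argument.
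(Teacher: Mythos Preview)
Your treatment of the second clause is correct and matches the paper's: both reduce to the elementary inequality $2+\sqrt{n-1}\le\sqrt{2n}$, equivalently $1+\sqrt{n-1}\le -1+\sqrt{2n}$, which first holds at $n=21$.

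For the first clause, your outline points in the right direction but leaves the decisive step unspecified. You propose to show that $E(c,2n-1)$ contains a disk when $c\in\mathcal{X}_n$, and then invoke ``Hausdorff continuity'' plus ``a density argument'' to conclude. The gap is that merely knowing a disk sits \emph{somewhere} in $E(c,2n-1)$ does not by itself place $2c$ near the interior in a way that is stable under perturbation of $c$; Hausdorff continuity of attractors does not transfer interiors. What is actually needed --- and what the paper supplies --- is twofold. First, an \emph{explicit} set with nonempty interior varying continuously in $c$ and contained in $E(c,2n-1)$: the paper produces a concrete axis-aligned rectangle $R(c,2n-1)$, with vertices given by a closed formula in $c$, and shows (Lemma~\ref{lem-comput}) that it is covered by its $2n-1$ first-level images precisely when $c\in\mathcal{X}_n$; no iteration to level~$k$ is required, and this is what makes the region $\mathcal{X}_n$ come out exactly. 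Second, the ``density argument'' is made precise via Rouch\'e's theorem (Lemma~\ref{lem-rouche}): the set $\widehat{\mathcal{M}}_n$ of zeros of polynomials $p(z)=1+\sum_{k=1}^m a_kz^{-k}$ with $a_k\in D_n$ is dense in $\mathcal{M}_n$, and for such a zero $c_0$ one has $p(c_0)=0$ sitting at the \emph{center} of $\frac{1}{2c_0^{m+1}}R(c_0,2n-1)$, so that continuity of $p$ and of $R(\cdot,2n-1)$ immediately gives an open neighborhood $U\ni c_0$ with $U\subset\mathcal{M}_n$. Without this polynomial-zero pivot your argument does not close, and the claim that ``$2c'\in\Int(E(c',2n-1))$ forces $c'\in\Int(\mathcal{M}_n)$'' is itself unjustified until you have a continuously varying interior witness. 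Finally, note that the covering is about images of the rectangle under the IFS for $E(c,2n-1)$, not about overlap of adjacent pieces of $E(c,n)$ as you wrote; the latter is the definition of $\mathcal{M}_n$, not of $\mathcal{X}_n$.
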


An immediate consequence of Theorem~\ref{thm:main} is that the generalized Bandt's conjecture is true for $n\ge21$.

The proof of Theorem~\ref{thm:main} provides numerous specific examples of interior points in $\mathcal{M}_n$. Let $\widehat{\mathcal{M}}_n$ be the set of zeros of {\em polynomials} with coefficients
in $D_n$ defined as
\begin{equation}\label{eq:Mnhat}
\widehat{\mathcal{M}}_n:= \Bigg\{c \in \mathbb{C}\setminus\disk:\ \exists\ a_1,\ldots,a_m\in D_n \mbox{ such that }
c^m\biggl(1 + \sum_{k=1}^m a_k c^{-k}\biggr) = 0 \Bigg\}\,.
\end{equation}
We will show that every point in $\widehat{\mathcal{M}}_n \cap \mathcal{X}_n$ is located within the interior of $\mathcal{M}_n$.

\begin{figure}
\centering
\includegraphics[width=.7\linewidth]{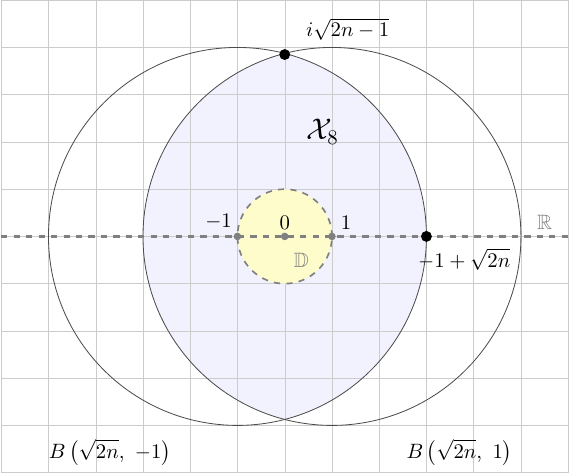}
\caption{Illustration of the region $\mathcal{X}_n$ defined in~(\ref{eq:Omega_n}) for $n=8$. The shaded area represents $\mathcal{X}_8$, which is the intersection of two disks of radius $\sqrt{2n}$ centered at $-1$ and $1$, excluding the unit disk $\disk$ and the real axis $\mathbb{R}$.}
\label{fig:Omega_n}
\end{figure}

\section{Proof of Theorem~\ref{thm:main}}\label{sec4}
To prove Theorem~\ref{thm:main} we need some preliminary lemmas, adapting techniques from \cite{Solomyak2003OnConvolutions}. The first result is standard, see for instance Lemma~7 of \cite{Indlekofer1995OnSystems}.

\begin{lemma}\label{lem:attractor_inclusion}
If $F \subset \mathbb{C}$ is a compact set, $c \in \mathbb{C} \setminus \disk$, and
\[
F \subset \bigcup_{t \in A_n} \left( t + \frac{F}{c} \right),
\]
then $F \subset E(c,n)$.
\end{lemma}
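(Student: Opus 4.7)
The plan is to give a direct symbolic-dynamics style proof using the explicit series representation (\ref{eq:Ecn_explicit}) of $E(c,n)$. The hypothesis $F \subset \bigcup_{t \in A_n}(t + F/c)$ says that every point of $F$ admits at least one ``address digit'' in $A_n$ followed by a point of $F$ rescaled by $1/c$; the plan is to iterate this and read off an infinite $A_n$-valued expansion of each point.

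First I would fix an arbitrary $x_0 \in F$ and, using the inclusion hypothesis, choose $t_0 \in A_n$ and $x_1 \in F$ with $x_0 = t_0 + c^{-1}x_1$. Applying the same hypothesis to $x_1$, then to $x_2$, and so on, I would produce by induction two sequences $(t_k)_{k\ge 0}\subset A_n$ and $(x_k)_{k\ge 0}\subset F$ satisfying $x_k = t_k + c^{-1} x_{k+1}$ for every $k\ge 0$. Substituting these equations into one another telescopes to the finite identity
\begin{equation*}
x_0 \;=\; \sum_{k=0}^{m-1} t_k\, c^{-k} \;+\; c^{-m} x_m \qquad \text{for every } m\ge 1.
\end{equation*}

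Next I would use compactness of $F$ (so $F$ is bounded, say $|y|\le M$ for all $y\in F$) together with the assumption $|c|>1$ to conclude that $|c^{-m} x_m|\le M|c|^{-m}\to 0$ as $m\to\infty$. Taking the limit $m\to\infty$ in the telescoped identity therefore yields the convergent series representation
\begin{equation*}
x_0 \;=\; \sum_{k=0}^{\infty} t_k\, c^{-k}, \qquad t_k\in A_n,
\end{equation*}
which is exactly the form on the right-hand side of (\ref{eq:Ecn_explicit}). Hence $x_0\in E(c,n)$, and since $x_0\in F$ was arbitrary, $F\subset E(c,n)$.

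There is no real obstacle here: the only things being used are the hypothesis (to drive the recursion), $|c|>1$ (to ensure the tail $c^{-m}x_m$ vanishes), and boundedness of $F$ (which follows from compactness). If one preferred a more abstract route, an alternative plan would be to let $T$ denote the Hutchinson operator $T(X):=\bigcup_{t\in A_n}(t+X/c)$, note that the hypothesis is $F\subset T(F)$, iterate to get $F\subset T^m(F)$ for every $m$, and invoke the standard fact that $T^m(F)$ converges in the Hausdorff metric to the unique fixed point $E(c,n)$ since $T$ is a contraction with ratio $1/|c|<1$; closedness of $E(c,n)$ then gives $F\subset E(c,n)$. I would prefer the explicit series argument above because it is shorter and self-contained given (\ref{eq:Ecn_explicit}).
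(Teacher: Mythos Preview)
Your argument is correct: the recursive choice of digits, the telescoping identity, and the tail estimate $|c^{-m}x_m|\le M|c|^{-m}\to 0$ (valid since $c\notin\disk$ means $|c|>1$) together yield the series representation and hence $x_0\in E(c,n)$ via (\ref{eq:Ecn_explicit}). The paper does not actually supply a proof of this lemma---it merely flags the result as standard and refers to Lemma~7 of \cite{Indlekofer1995OnSystems}---so your write-up is precisely the routine argument being alluded to, and your alternative Hutchinson-operator sketch is the other standard way to see it.
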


\begin{lemma}\label{lem:Mn_inclusion}
If $F \subset E(c, 2n - 1)$ and there exist $a_1,a_2,\ldots,a_m\in D_n$ such that
\[
1 + \sum_{k=1}^m a_k c^{-k} \in \frac{1}{2 c^{m+1}} F,
\]
then $c \in \mathcal{M}_n$.
\end{lemma}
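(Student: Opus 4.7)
The plan is to reduce the hypothesis directly to the formal-series criterion of Proposition~\ref{prop:Mn_characterization}: $c\in\mathcal{M}_n$ iff there exist $(a_k')_{k\ge 1}\subset D_n$ with $1+\sum_{k=1}^\infty a_k' c^{-k}=0$. The finite list $a_1,\dots,a_m$ furnished by the hypothesis is going to serve as the leading block of such a sequence, while the condition $F\subset E(c,2n-1)$ will supply the infinite tail through the explicit expansion~(\ref{eq:Ecn_explicit}).

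First I would pick $w\in F$ witnessing the assumption, so that $1+\sum_{k=1}^m a_k c^{-k}=w/(2c^{m+1})$. Because $F\subset E(c,2n-1)$, the description~(\ref{eq:Ecn_explicit}) lets me write $w=\sum_{j\ge 0} b_j c^{-j}$ with every $b_j\in A_{2n-1}$. Dividing this series by $2c^{m+1}$ and reindexing gives
\[
\frac{w}{2c^{m+1}}=\sum_{k\ge m+1}\frac{b_{k-m-1}}{2}\,c^{-k},
\]
so the hypothesis rearranges to
\[
1+\sum_{k=1}^m a_k c^{-k}+\sum_{k>m}\Bigl(-\tfrac{b_{k-m-1}}{2}\Bigr)c^{-k}=0.
\]
Setting $a_k':=a_k$ for $1\le k\le m$ and $a_k':=-b_{k-m-1}/2$ for $k>m$ yields exactly the kind of series identity demanded by Proposition~\ref{prop:Mn_characterization}, provided every $a_k'$ lies in $D_n$.

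This last coefficient check is the only piece of bookkeeping, and it is immediate: by~(\ref{eq:Dn}) we have $D_n=\tfrac12 A_{2n-1}$, and $D_n$ is symmetric about $0$, so $-b_{k-m-1}/2\in D_n$ whenever $b_{k-m-1}\in A_{2n-1}$. I do not expect any serious obstacle here; conceptually, the factor $2c^{m+1}$ appearing in the statement is tuned precisely so that the passage from an $A_{2n-1}$-coefficient of the expansion of $w$ to a $D_n$-coefficient of the expansion of $-1$ amounts to nothing more than dividing by $2$ and flipping a sign.
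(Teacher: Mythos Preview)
Your argument is correct and matches the paper's proof essentially step for step: both pick an element of $F\subset E(c,2n-1)$, expand it via~(\ref{eq:Ecn_explicit}), use the resulting coefficients (divided by $2$ and negated) as the tail of the sequence, and invoke Proposition~\ref{prop:Mn_characterization}. The only cosmetic difference is that the paper absorbs the factor $1/2$ into the tail coefficients immediately (writing them as elements of $D_n$ from the start), whereas you keep them in $A_{2n-1}$ and divide by $2$ at the end.
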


\begin{proof}
Our hypothesis, via (\ref{eq:Ecn_explicit}) and (\ref{eq:Dn}), imply that
\begin{equation}\label{qtal} 1 + \sum_{k=1}^m a_k c^{-k} = \frac{1}{c^{m+1}}\sum_{k=0}^\infty b_k c^{-k} = \sum_{k=0}^\infty b_k c^{-k-(m+1)} \end{equation}
for some sequence $\{b_k\}_{k=1}^\infty$ such that $b_k \in D_n$.
Now take $q_k:=a_k\in D_n$ for $1\le k\le m$ and $q_k:=-b_{k-(m+1)}\in -D_n=D_n$ for all $k>m$. Rewrite (\ref{qtal}) as
\[ 1 + \sum_{k=1}^\infty q_k c^{-k} = 0.\]
Since $q_k\in D_n$ for all $k\geq1$, 
Proposition~\ref{prop:Mn_characterization} tells us that $c\in\mathcal{M}_n$.
\end{proof}

\begin{figure}
    \centering
\includegraphics[width=.5\linewidth]{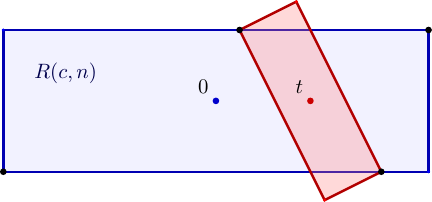}
    \caption{Geometric configuration of the rectangle $R(c,n)$ and its image $t+c^{-1}R(c,n)$ for $t \in A_n$.}
    \label{fig:R5t}
\end{figure}

\begin{figure}
    \centering
\includegraphics[width=.7\linewidth]{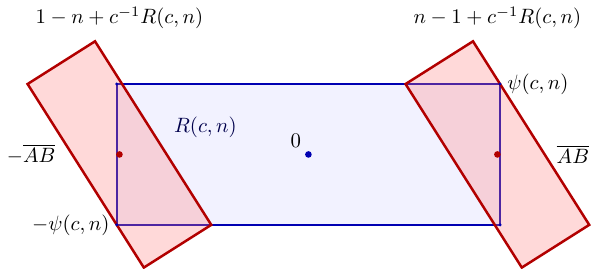}
    \caption{An illustration of the critical case $n= |c|^2 + 2|\operatorname{Re}(c)|$ in the proof of Lemma~\ref{lem-comput}.}
    \label{fig:psiedges}
\end{figure}

Next we state a \emph{covering property} which will be the key tool to prove Theorem~\ref{thm:main}. Its proof is much simpler than the one given by Solomyak and Xu for the case $n=3$, see~\cite[Lemma 3.3]{Solomyak2003OnConvolutions}. For instance, they lacked an explicit parameterization for the covering rectangle. Let $R(c,n)\subset \mathbb{C}$ denote the rectangle centered at the origin with vertices 
\begin{equation}\label{eq:rect}
\biggl\{\psi(c,n),\overline{\psi(c,n)},-\psi(c,n),-\overline{\psi(c,n)}
\biggr\},
\end{equation}
where $\overline{\psi(c,n)}$ is the complex conjugate of the vertex $\psi(c,n)$ defined as
\begin{equation}\label{eq:psi}
 \psi(c,n) := \begin{cases} c(n+1)/(1+c) & \text{if } \operatorname{Re}(c) \geq 0 \\ c(n+1)/(1-c)  & \text{if } \operatorname{Re}(c) < 0, \end{cases}
\end{equation}
where $\operatorname{Re}(c)$ denotes the real part of $c$. The expression of the vertex $\psi(c,n)$ in terms of $c$ and $n$ was obtained by imposing the geometric conditions prescribed in the proof of the covering property (Lemma~\ref{lem-comput}).

\begin{lemma}[covering property]\label{lem-comput} 
For any $c\in \mathcal{X}_{\frac{n+1}{2}}$, the rectangle $R(c,n)$ is covered by its $n$ first-level images 
\[
R(c,n) \subset \bigcup_{t\in A_n}\biggl(t+\frac{R(c,n)}{c}\biggr).
\]
\end{lemma}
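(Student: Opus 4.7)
The plan is to prove the covering slice by slice, by analyzing horizontal cross sections of both $R(c,n)$ and $R(c,n)/c$ at each imaginary height $v$. The hypothesis $c \in \mathcal{X}_{(n+1)/2}$ amounts to $|c \pm 1|^2 \leq n+1$, and the proof will show that this is the \emph{sharp} condition making both the width and the reach of these cross sections suffice.

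By the symmetry $c \mapsto -c$ (which swaps the two cases in (\ref{eq:psi}) so that $\psi(-c, n) = -\psi(c, n)$, leaves $R(c,n)$ and $R(c,n)/c$ invariant, and is consistent with $A_n = -A_n$), it suffices to treat $\Re(c) \geq 0$. Writing $c = x + iy$ with $x \geq 0$, $y > 0$, and $\psi = R + iI$, a direct computation from (\ref{eq:psi}) yields
\[
R = \frac{(n+1)(x+|c|^2)}{|1+c|^2}, \qquad I = \frac{(n+1)y}{|1+c|^2},
\]
together with the useful algebraic identity $yR - xI = |c|^2 I$. In this half-plane the hypothesis collapses to the single inequality $|1+c|^2 \leq n+1$, i.e.\ $|c|^2 + 2x \leq n$.

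For fixed $v \in [-I, I]$, the horizontal cross section $\{u \in \R : u + iv \in R(c,n)/c\}$ is the intersection of the two strips $|\Re(c(u+iv))| \leq R$ and $|\Im(c(u+iv))| \leq I$. Using $yR - xI = |c|^2 I$ to determine, throughout $v \in [-I, I]$, which endpoint of each pair is active, one finds that this intersection simplifies to the interval
\[
[L(v), U(v)] = \left[ -\frac{xv+I}{y},\; \frac{I-xv}{y} \right],
\]
of constant width $U(v) - L(v) = 2I/y = 2(n+1)/|1+c|^2 \geq 2$ by hypothesis. Hence consecutive translates $t + R(c,n)/c$ and $(t+2) + R(c,n)/c$ overlap at every height $v$, so $\bigcup_{t \in A_n}[t + L(v),\, t + U(v)]$ is the single interval $[-(n-1) + L(v),\, (n-1) + U(v)]$.

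It remains to check that this union reaches past the sides $u = \pm R$ of $R(c,n)$. Since $U(v)$ is linear in $v$, its minimum over $[-I, I]$ is attained at $v = I$, with $U(I) = I(1-x)/y$; the inequality $(n-1) + U(I) \geq R$ expands, via the explicit formulas for $R$ and $I$, to $(n-1)(1+2x+|c|^2) \geq (n+1)(2x - 1 + |c|^2)$, which collapses exactly to $|c|^2 + 2x \leq n$---the hypothesis again. The symmetric bound $-(n-1) + L(-I) \leq -R$ follows by the same computation. Putting it all together, at each $v \in [-I, I]$ the horizontal slice $[-R, R]$ of $R(c,n)$ is contained in $[-(n-1) + L(v),\, (n-1) + U(v)]$, which proves the covering. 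The main delicacy---and what justifies the shape of $\mathcal{X}_{(n+1)/2}$---is that both the width condition $U(v) - L(v) \geq 2$ and the reach condition $(n-1) + U(I) \geq R$ reduce to exactly the same sharp inequality $|c|^2 + 2x \leq n$; this double cancellation depends essentially on the specific choice of $\psi$ in (\ref{eq:psi}).
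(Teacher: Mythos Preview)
Your proof is correct. The slice-by-slice analysis is clean, and the two key reductions---the width bound $U(v)-L(v)=2(n+1)/|1+c|^2\ge 2$ and the reach bound $(n-1)+U(I)\ge R$---do both collapse to $|c|^2+2x\le n$, exactly as you claim. (One small point: you reduce via $c\mapsto -c$ to $\Re(c)\ge 0$ and then silently take $y>0$; the case $y<0$ follows by the conjugation $c\mapsto\bar c$, which also preserves $R(c,n)$, $R(c,n)/c$ and $A_n$.)

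Your route is genuinely different from the paper's. The paper argues geometrically: it observes that the scaled rectangle $c^{-1}R(c,n)$ always has a pair of opposite vertices sitting exactly on the top and bottom edges $\Im=\pm I$ of $R(c,n)$ (this is your identity $\Im(\psi/c)=-I$ in disguise), and then checks that on the outer boundary $|c|^2+2|\Re(c)|=n$ of $\mathcal{X}_{(n+1)/2}$ the $n$ translates tile $R(c,n)$ exactly, with adjacent rectangles meeting along edges and the extreme vertices $\pm\psi$ landing on the outer edges of the end pieces; the covering for interior $c$ is then deduced by monotonicity. Your approach instead computes the horizontal cross section of $c^{-1}R(c,n)$ explicitly, identifies which of the four linear constraints is binding via the identity $yR-xI=|c|^2I$, and verifies both overlap and reach algebraically. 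What your approach buys is a fully self-contained verification with no appeal to pictures or to a monotonicity argument in $|c|$; what the paper's approach buys is a transparent geometric picture of \emph{why} the boundary of $\mathcal{X}_{(n+1)/2}$ is exactly the critical-tiling locus.
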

\begin{proof}
Recall that $R(c,n)$ is the rectangle (\ref{eq:rect}) centered at the origin with vertices defined in (\ref{eq:psi}). One can easily check that, for any $t\in A_n$, there is a pair of diagonally opposing vertices of the rectangle $t+c^{-1}R(c,n)$ placed along the horizontal lines containing the upper and lower edges of $R(c,n)$. The remaining pair of vertices of $t+c^{-1}R(c,n)$ are placed above and below those lines (see Figure~\ref{fig:R5t}).

To show that the $n$ rectangles $t+c^{-1}R(c,n)$ cover $R(c,n)$ as long as the parameter $c$ is in $\mathcal{X}_{\frac{n+1}{2}}$, consider the \emph{outer boundary} of $\mathcal{X}_{\frac{n+1}{2}}$ defined as $\partial\mathcal{X}_{\frac{n+1}{2}}\setminus(\partial\disk\cup\mathbb{R})$ which, after some algebraic manipulations, can be implicitly parameterized by
\begin{equation}\label{c}
n= |c|^2 + 2 |c| |\cos\theta| = |c|^2 + 2 |\operatorname{Re}(c)|,
\end{equation}
where $\theta$ denotes the argument of $c$.

Now observe that if the parameter $c$ satisfies (\ref{c}), then 
\[ |c|= \sqrt{n+1-\sin ^2 \theta }-| \cos \theta | \] 
and the vertices $\psi(c,n)$ and $-\psi(c,n)$ intersect the edges $-\overline{AB}$ and $\overline{AB}$ of the leftmost and rightmost rectangles, where $A:=n-1+c^{-1}\psi(c,n)$ and $B:=n-1-c^{-1}\overline{\psi(c,n)}$. See Figure~\ref{fig:psiedges}. Moreover, the $n$ rectangles $t+c^{-1}R(c,n)$ intersect tangentially side by side, thus \textit{critically} covering the rectangle $R(c,n)$, see Figure~\ref{fig:R5covering}. 

If $c\in\Int(\mathcal{X}_{\frac{n+1}{2}})$, then \[1<|c|< \sqrt{n+1-\sin ^2 \theta }-| \cos \theta |\]
and $R(c,n)$ is covered by its $n$ images (Figure~\ref{fig:R5covering}, bottom). Finally, if $c\notin\mathcal{X}_{\frac{n+1}{2}}\cup(\mathbb{D}\cup\mathbb{R})$ then 
\[|c|>\sqrt{n+1-\sin ^2 \theta }-| \cos \theta |\]
and the $n$ rectangles $t+c^{-1}R(c,n)$ are disjoint (Figure~\ref{fig:R5covering}, top), thus not covering $R(c,n)$.
\end{proof}

\begin{figure}
\centering
\includegraphics[width=\linewidth]{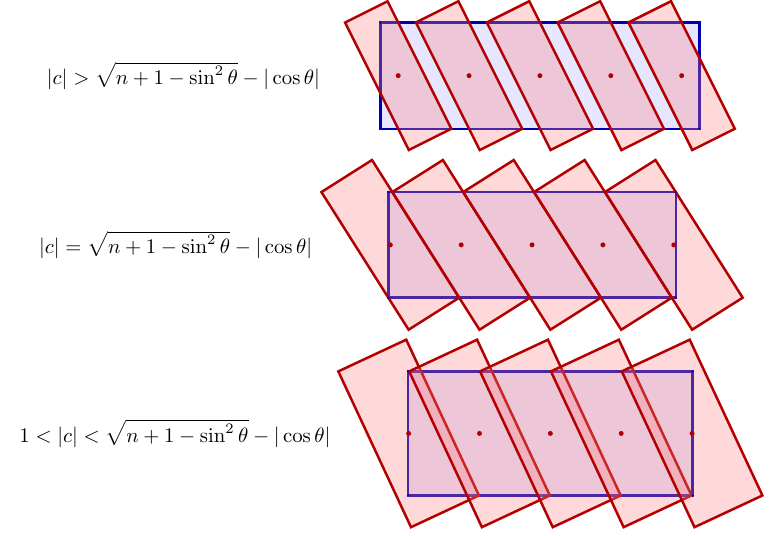}   
    \caption{An illustration of the proof of the covering property (Lemma~\ref{lem-comput}).
    }
    \label{fig:R5covering}
\end{figure}

The following lemma is a standard consequence of Rouch\'e's Theorem. Recall that $\widehat{\mathcal{M}}_n$ denotes the set of zeros of {\em polynomials} with coefficients
in $D_n$ given in~(\ref{eq:Mnhat}). 

\begin{lemma} \label{lem-rouche}
$\mathcal{M}_n = \clos(\widehat{\mathcal{M}}_n)$.
\end{lemma}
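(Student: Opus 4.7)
The plan is to show the two inclusions separately. The inclusion $\widehat{\mathcal{M}}_n \subset \mathcal{M}_n$ is immediate from Proposition~\ref{prop:Mn_characterization}: given $a_1,\ldots,a_m \in D_n$ with $c^m + a_1 c^{m-1} + \cdots + a_m = 0$, extend the finite sequence by $a_k := 0$ for $k > m$ (using $0 \in D_n$). To upgrade this to $\clos(\widehat{\mathcal{M}}_n) \subset \mathcal{M}_n$, I would establish that $\mathcal{M}_n$ is closed in $\mathbb{C}\setminus\disk$. This follows from the standard continuous dependence of the attractor $E(c,n)$ on the parameter $c$ in the Hausdorff metric (every parameter in $\mathbb{C}\setminus\disk$ yields a uniform contraction IFS on any sufficiently large ball), combined with the fact that a Hausdorff limit of connected compact sets in $\mathbb{C}$ is connected.

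For the reverse inclusion $\mathcal{M}_n \subset \clos(\widehat{\mathcal{M}}_n)$, fix $c \in \mathcal{M}_n$ and, invoking Proposition~\ref{prop:Mn_characterization}, pick a sequence $(a_k)_{k\geq 1} \subset D_n$ with $f(c)=0$, where
\[ f(z) := 1 + \sum_{k=1}^\infty a_k z^{-k}. \]
Since $|a_k| \le n-1$, the series converges uniformly on compact subsets of $\{|z|>1\}$, so $f$ is holomorphic there and not identically zero (as $f(z)\to 1$ at infinity). The truncations $f_m(z) := 1 + \sum_{k=1}^m a_k z^{-k}$ converge to $f$ uniformly on every such compact set. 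Choose a small closed disk $B \subset \{|z|>1\}$ centered at $c$ on whose boundary $f$ has no zeros; Rouch\'e's theorem then guarantees that $f_m$ and $f$ have the same number of zeros in $B$ for all sufficiently large $m$, producing $c_m \in B$ with $f_m(c_m)=0$ and $c_m \to c$. Clearing the pole at zero, $c_m$ is a root of the polynomial $c_m^m + a_1 c_m^{m-1} + \cdots + a_m$ with coefficients in $D_n$, and $|c_m|>1$ for $m$ large, so $c_m \in \widehat{\mathcal{M}}_n$ and $c \in \clos(\widehat{\mathcal{M}}_n)$.

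The Rouch\'e/Hurwitz step is entirely routine once one observes that $0\in D_n$ and that $f$ is analytic on $\{|z|>1\}$. The only point that requires a moment of care, and that I expect to be the main obstacle, is the closedness of $\mathcal{M}_n$ used in the first inclusion at the closure level: it is the only step where one needs input beyond the algebraic/analytic setup, and it rests on the topological continuity of the Hutchinson attractor in the parameter together with the preservation of connectedness under Hausdorff convergence.
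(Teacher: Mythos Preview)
Your proof is correct and matches the paper's intent: the lemma is stated there without proof, only flagged as ``a standard consequence of Rouch\'e's Theorem,'' and your Rouch\'e/Hurwitz argument for the inclusion $\mathcal{M}_n \subset \clos(\widehat{\mathcal{M}}_n)$ is precisely that standard route. For the closedness of $\mathcal{M}_n$ you could alternatively argue directly from Proposition~\ref{prop:Mn_characterization} by a diagonal/compactness argument on the coefficient sequences in the finite set $D_n$, which avoids the detour through Hausdorff continuity of the attractor; but your approach is equally valid.
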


Now we have all the necessary ingredients to prove Theorem~\ref{thm:main}.

\begin{proof}[{\bf Proof of Theorem~\ref{thm:main}}]
Let us prove that $\mathcal{M}_n \cap \mathcal{X}_n \subset \clos(\Int(\mathcal{M}_n))$ for any $n\ge2$. In view of Lemma~\ref{lem-rouche}, it is enough to show that $\Int(\mathcal{X}_n)\cap \widehat{\mathcal{M}}_n\subset\Int(\mathcal{M}_n)$. So, let $c_0\in\Int(\mathcal{X}_n)\cap \widehat{\mathcal{M}}_n$. We must see that there is an open neighborhood $U$ of $c_0$ contained in $\mathcal{M}_n$.

Since $c_0\in \widehat{\mathcal{M}}_n$, there exist $a_1, a_2,\ldots,a_m\in D_n$ such that
$$
p(c_0)=0, \mbox{ where } p(z) =  1 + \sum_{k=1}^m a_k z^{-k}.
$$
Note that $\frac{1}{2c_0^{m+1}}R(c_0,2n-1)$ is a solid (with nonempty interior) rectangle centered at $0$. Hence, 
\begin{equation}\label{hola2}
p(c_0)=0\in\Int\biggl(\frac{1}{2c_0^{m+1}}R(c_0,2n-1)\biggr).
\end{equation}
We are assuming that $c_0\in\Int(\mathcal{X}_n)$. Moreover, the functions (\ref{eq:psi}) that define the rectangles $R(c,2n-1)$ are continuous with respect to $c$. These facts, together with (\ref{hola2}) and the continuity of $p(z)$, imply that there is an open neighbourhood $U$ of $c_0$ such that $U\subset\mathcal{X}_n$ and 
\begin{equation}\label{hola3}
p(c)\in\frac{1}{2c^{m+1}}R(c,2n-1)\mbox{ for every }c\in U. 
\end{equation}
On the other hand, from Lemmas~\ref{lem:attractor_inclusion} and~\ref{lem-comput} it follows that
\begin{equation}\label{hola4}
R(c,2n-1)\subset E(c,2n-1)\mbox{ for all }c\in\mathcal{X}_n.
\end{equation}
Finally, (\ref{hola3}) and (\ref{hola4}) allow us to use Lemma~\ref{lem:Mn_inclusion}, which tells us that $c\in \mathcal{M}_n$ for every $c \in U$.

Let us now prove the second statement of the theorem. We must show that for $n\geq 21$ we have $\mathcal{M}_n \cap \mathcal{X}_n=\mathcal{M}_n \setminus\R$. The inclusion $\mathcal{M}_n \cap \mathcal{X}_n\subset\mathcal{M}_n \setminus\R$ is obvious because $\mathcal{X}_n\cap\mathbb{R}=\emptyset$. Hence, we only need to show that $\mathcal{M}_n \setminus\R\subset \mathcal{X}_n$ for $n\geq 21$. 
It is easy to check that $1+\sqrt{n-1}<-1 + \sqrt{2 n}$ for $n\ge21$. See Figure~\ref{fig:R21} for an example. In consequence, 
\begin{equation}\label{estimate2}
    B\bigl( 1+\sqrt{n-1},0\bigr)\subset B\bigl(\sqrt{2n},-1\bigr)\cap B\bigl(\sqrt{2n},1\bigr)\mbox{ for }n\ge21.
\end{equation}
Since $\mathcal{X}_n=\biggl(B\bigl(\sqrt{2n},-1\bigr)\cap B\bigl(\sqrt{2n},1\bigr)\biggr)\setminus\bigl(\R\cup\disk\bigr)$ and, from Proposition~\ref{prop:upperbound}, we know that $\mathcal{M}_n \setminus \mathbb{R}$ is contained within the disk $B\bigl( 1+\sqrt{n-1},0\bigr)$, it follows that $\mathcal{M}_n \setminus\R\subset \mathcal{X}_n$.
\end{proof}

\begin{figure}
\centering    
\includegraphics[width=\linewidth]{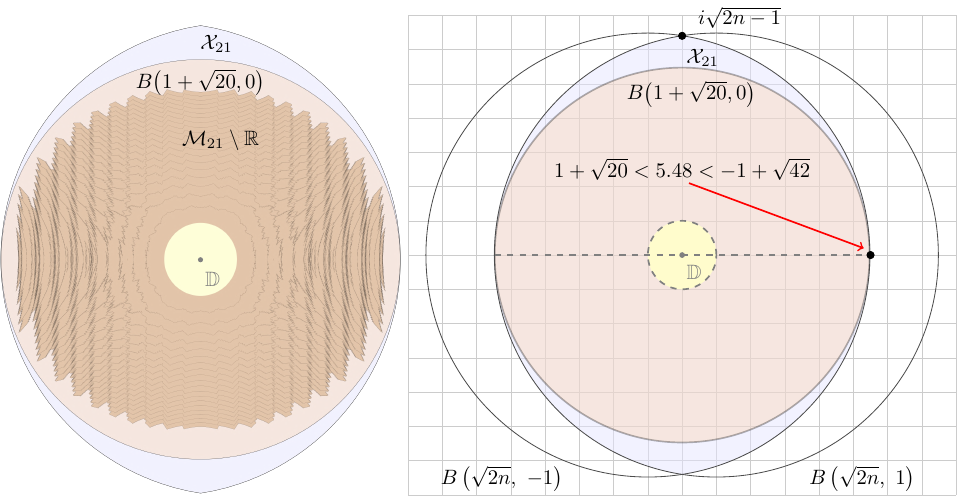}
    \caption{The set $\mathcal{M}_{21}\setminus\R$ contained in $\mathcal{X}_{21}$. Since $1+\sqrt{20}<-1 + \sqrt{42}$, the region $\mathcal{X}_{21}$ contains $B(1+\sqrt{20},0)\setminus\bigl(\R\cup\disk\bigr)$ which in turn contains $\mathcal{M}_{21}\setminus\R$ by Proposition~\ref{prop:upperbound}.
    }
    \label{fig:R21}
\end{figure}

\section{Remarks and suggestions on further research}
The connection we established between the convexity set $\mathcal{M}_n^*$ and the connectedness locus $\mathcal{M}_n$ is clearer when we consider the following characterizations of $\mathcal{M}_n$ and $\mathcal{M}_n^*$ involving the set $E(c,2n-1)$ of differences of $E(c,n)$ and its convex hull $H(c,2n-1)$.
\begin{equation*}
\begin{aligned}
    \mathcal{M}_n&=\bigl\{c\in\mathbb{C}\setminus\mathbb{D}:2c\in E(c,2n-1)\bigr\},\\
    \mathcal{M}_n^*&=\bigl\{c\in\mathbb{C}\setminus\mathbb{D}:2c\in H(c,2n-1)\bigr\}.
\end{aligned}
\end{equation*}
In particular, the condition $2c\in E(c,2n-1)$ implies that there is an asymptotic self-similarity between $\mathcal{M}_n$ and $E(c,2n-1)$. For each $c\in  \partial\mathcal{M}_n\setminus\mathbb{R}$ we have $2c\in \partial E(c,2n-1)$, and a neighborhood of $c$
from $\mathcal{M}_n$ looks asymptotically similar to a neighborhood of $2c$ from $E(c,2n-1)$, observe the animation~\cite{Espigule2024AsymptoticYoutu.be/11NZDHNahJs}.

For $3 \leq n < 21$, we conjecture that the set $\mathcal{M}_n \setminus (\mathbb{R} \cup \mathcal{X}_n)$ is regular-closed. One could try to extend the partial results of Nguyen Viet Hung for $n=2$, who, as part of his PhD thesis~\cite{Hung2007PolygonFractals.}, obtained three new regions in addition to $\mathcal{X}_2$. Note that for any $n$, it holds that $ \mathcal{X}_n \subset B(\sqrt{2n - 1}, 0) $, and when $ |c| < \sqrt{2n - 1} $, it is expected that $ E(c, 2n - 1) $ possesses a nonempty interior, given that the similarity dimension of the IFS exceeds 2. Computational evidence~\cite[personal notes]{Espigule2024MandelbrotEcn} suggests that by replacing the rectangle \( R(c,n) \) with the parallelogram \( P(c,n) \) centered at the origin with vertices
\begin{equation*}
\biggl\{\, n - 1 + c^{-1},\; n - 1 - c^{-1},\; -n + 1 - c^{-1},\; -n + 1 + c^{-1} \,\biggr\},
\end{equation*}
we have that \( P(c, 2n - 1) \subset E(c, 2n - 1) \) for any non-real \( c \in \mathcal{M}_n \). These findings are part of ongoing work and will be further investigated in future research.

Additionally, inspired by Solomyak and Xu's investigation into complex Bernoulli convolutions~\cite{Solomyak2003OnConvolutions}, exploring the measures supported on $E(c,n)$ and their absolute continuity could prove to be a productive avenue for future research.

\section{Conclusions}

In this paper we have introduced the family of \emph{collinear fractals} $E(c,n)$ defined as the compact sets invariant under the iterated function system $\{f_t(z):=t+c^{-1}z\}_t$, where $c$ is a complex parameter outside the unit disk and $t$ ranges over the symmetric set of integers $\{-n+1,-n+3,\ldots,n-3,n-1\}$. 

For any integer $n\ge2$, we define the connectedness locus $\mathcal{M}_n$ as the set of parameters $c$ for which $E(c,n)$ is connected. Among other results, we have proven that a nontrivial portion of $\mathcal{M}_n$ is contained in the closure of its interior for any $n\ge2$. In addition, we prove that, when $n\ge 21$, \emph{the whole} $\mathcal{M}_n\setminus\mathbb{R}$ lies in fact within the closure of its interior. In other words, that the generalized Bandt's conjecture about the regular-closedness of $\mathcal{M}_n$ is true.

\bibliographystyle{amsalpha}
\bibliography{references}

\newcommand{\etalchar}[1]{$^{#1}$}
\providecommand{\bysame}{\leavevmode\hbox to3em{\hrulefill}\thinspace}
\providecommand{\MR}{\relax\ifhmode\unskip\space\fi MR }
\providecommand{\MRhref}[2]{%
  \href{http://www.ams.org/mathscinet-getitem?mr=#1}{#2}
}
\providecommand{\href}[2]{#2}
\begin{thebibliography}{BDLW21}

\bibitem[ALT21]{Akiyama2021}
Shigeki Akiyama, Benoit Loridant, and Jörg Thuswaldner, \emph{{Topology of planar self-affine tiles with collinear digit set}}, Journal of Fractal Geometry \textbf{8} (2021), 53–93.

\bibitem[Ban02]{Bandt2002OnMaps}
Christoph Bandt, \emph{{On the Mandelbrot set for pairs of linear maps}}, Nonlinearity \textbf{15} (2002), no.~4, 1127.

\bibitem[BBBP98]{Beaucoup1998PowerRay}
Franck Beaucoup, Peter Borwein, David Boyd, and Christopher Pinner, \emph{{Power series with restricted coefficients and a root on a given ray}}, Mathematics of Computation \textbf{67} (1998), no.~222, 715--736.

\bibitem[BBC{\etalchar{+}}07]{Bailey2007}
David~H. Bailey, Jonathan~M. Borwein, Neil~J. Calkin, Roland Girgensohn, D.~Russell Luke, and Victor~H. Moll, \emph{{Experimental mathematics in action}}, A K Peters/CRC Press, 2007.

\bibitem[BCD23]{Baez2023}
John~C Baez, J.~Daniel Christensen, and Sam Derbyshire, \emph{{The Beauty of Roots}}, Notices of the American Mathematical Society \textbf{70} (2023), 1495--1497.

\bibitem[BDLW21]{Bray2021}
Harrison Bray, Diana Davis, Kathryn Lindsey, and Chenxi Wu, \emph{{The shape of Thurston's Master Teapot}}, Advances in Mathematics \textbf{377} (2021), 107481.

\bibitem[BEL08]{Borwein2008}
Peter Borwein, Tamás Erdélyi, and Friedrich Littmann, \emph{{Polynomials with coefficients from a finite set}}, Transactions of the American Mathematical Society \textbf{360} (2008), 5145--5154.

\bibitem[BH85]{Barnsley1985AMaps}
Michael~F Barnsley and Andrew~N Harrington, \emph{{A Mandelbrot set for pairs of linear maps}}, Physica D: Nonlinear Phenomena \textbf{15} (1985), no.~3, 421--432.

\bibitem[BH08a]{Bandt2008Self-similarOverlaps}
Christoph Bandt and Nguyen Hung, \emph{{Self-similar sets with an open set condition and great variety of overlaps}}, Proceedings of the American Mathematical Society \textbf{136} (2008), no.~11, 3895--3903.

\bibitem[BH08b]{Bandt2008FractalSets}
Christoph Bandt and Nguyen~Viet Hung, \emph{{Fractal n-gons and their Mandelbrot sets}}, Nonlinearity \textbf{21} (2008), no.~11, 2653--2670.

\bibitem[BK91]{Bandt1991Self-SimilarFractals}
Christoph Bandt and Karsten Keller, \emph{{Self-Similar Sets 2. A Simple Approach to the Topological Structure of Fractals}}, Mathematische Nachrichten \textbf{154} (1991), no.~1, 27--39.

\bibitem[Bou92]{Bousch1992SurHolomorphe}
Thierry Bousch, \emph{{Sur quelques probl{\'{e}}mes de dynamique holomorphe}}, Ph.D. thesis, Orsay, 1992.

\bibitem[Bou93]{Bousch1993ConnexiteFonctions}
\bysame, \emph{{Connexit{\'{e}} locale et par chemins holderiens pour les systemes it{\'{e}}r{\'{e}}s de fonctions}}, Unpublished (1993), 1--23 (French).

\bibitem[Chr06]{Christensen2006}
J.~Daniel Christensen, \emph{{Plots of roots of polynomials with integer coefficients}}, \url{http://jdc.math.uwo.ca/roots/}, February 2006, Accessed: 25 November 2024.

\bibitem[CKW17]{Calegari2017RootsConjecture}
Danny Calegari, Sarah Koch, and Alden Walker, \emph{{Roots, Schottky semigroups, and a proof of Bandt's conjecture}}, Ergodic Theory and Dynamical Systems \textbf{37} (2017), no.~8, 2487--2555.

\bibitem[CW19]{Calegari2019ExtremeSets}
Danny Calegari and Alden Walker, \emph{Extreme points in limit sets}, Proc. Am. Math. Soc. \textbf{147} (2019), no.~9, 3829--3837 (English).

\bibitem[DHX22]{Dorfsman22}
Gabriel Dorfsman-Hopkins and Shuchang Xu, \emph{{Searching for rigidity in algebraic starscapes}}, Journal of Mathematics and the Arts \textbf{16} (2022), 57--74.

\bibitem[Esp24a]{Espigule2024AsymptoticYoutu.be/11NZDHNahJs}
Bernat Espigule, \emph{{Asymptotic self-similarity between collinear fractals {{\(E(c,2n-1)\)}} and {{\(\mathcal{M}_n\)}} }}, \url{https://youtu.be/11NZDHNahJs}, 2024.

\bibitem[Esp24b]{Espigule2024MandelbrotEcn}
\bysame, \emph{{Mandelbrot set {{\(\mathcal{M}_n\)}} for collinear fractals E(c,n)}}, \url{https://www.complextrees.com/collinear/talks/Talk2024June10th.pdf}, 2024.

\bibitem[Hat85]{Hata1985OnSets}
Masayoshi Hata, \emph{{On the structure of self-similar sets}}, Japan Journal of Applied Mathematics \textbf{2} (1985), no.~2, 381.

\bibitem[HI20]{Himeki2020M4Regular-closed}
Yutaro Himeki and Yutaka Ishii, \emph{{{\(\mathcal{M}_4\)}} is regular-closed}, Ergodic Theory Dyn. Syst. \textbf{40} (2020), no.~1, 213--220 (English).

\bibitem[HS15]{Hare2015Two-dimensionalUniqueness}
Kevin~G. Hare and Nikita Sidorov, \emph{{Two-dimensional self-affine sets with interior points, and the set of uniqueness}}, Nonlinearity \textbf{29} (2015), no.~1, 1.

\bibitem[HS17]{Hare2017OnExpansions}
\bysame, \emph{{On a family of self-affine sets: Topology, uniqueness, simultaneous expansions}}, Ergodic Theory and Dynamical Systems \textbf{37} (2017), no.~1, 193--227.

\bibitem[HST22]{Harriss2022}
Edmund Harriss, Katherine~E. Stange, and Steve Trettel, \emph{{Algebraic Number Starscapes}}, Experimental Mathematics \textbf{31} (2022), 1098–1149.

\bibitem[Hun07]{Hung2007PolygonFractals.}
Nguyen~Viet Hung, \emph{{Polygon fractals.}}, Ph.D. thesis, Greifswald University, 2007.

\bibitem[IJK95]{Indlekofer1995OnSystems}
K~H Indlekofer, A~J{\'{a}}rai, and I~K{\'{a}}tai, \emph{{On some properties of attractors generated by iterated function systems}}, Acta Scientiarum Mathematicarum \textbf{60} (1995), no.~1, 411.

\bibitem[LTW24]{LindseyTiozzoWu2024}
Kathryn Lindsey, Giulio Tiozzo, and Chenxi Wu, \emph{{Master teapots and entropy algorithms for the Mandelbrot set}}, Transactions of the American Mathematical Society (2024), 53, Accepted for publication.

\bibitem[LW22]{Lindsey2022}
Kathryn Lindsey and Chenxi Wu, \emph{{A characterization of Thurston's Master Teapot}}, Ergodic Theory and Dynamical Systems (2022), 1--29.

\bibitem[Nak24]{Nakajima2024MandelbrotSeries}
Yuto Nakajima, \emph{{Mandelbrot set for fractal n-gons and zeros of power series}}, Topology and its Applications \textbf{350} (2024), 108918.

\bibitem[OP93]{Odlyzko1993}
A.~M. Odlyzko and B.~Poonen, \emph{{Zeros of Polynomials with 0, 1 Coefficients}}, L'Enseignement Mathématique \textbf{39} (1993), no.~3-4, 317.

\bibitem[Sen21]{Sendra2021}
Juana Sendra, \emph{{Bohemian Matrices: Past, Present and Future}}, Communications in Computer and Information Science, vol. 1414, 2021.

\bibitem[Sol98]{Solomyak1998MeasureFamilies}
Boris Solomyak, \emph{{Measure and dimension for some fractal families}}, Mathematical Proceedings of the Cambridge Philosophical Society \textbf{124} (1998), no.~3, 531--546.

\bibitem[Sol04]{Solomyak2004MandelbrotGeometry}
\bysame, \emph{{Mandelbrot set for a pair of linear maps: the local geometry}}, Analysis in Theory and Applications \textbf{20} (2004), no.~2, 149--157.

\bibitem[Sol05]{Solomyak2005OnSelf-similarity}
\bysame, \emph{{On the 'Mandelbrot set' for pairs of linear maps: asymptotic self-similarity}}, Nonlinearity \textbf{18} (2005), no.~5, 1927--1943.

\bibitem[SP23]{Silvestri2023AccessibilitySet}
Stefano Silvestri and Rodrigo~A. P{\'{e}}rez, \emph{{Accessibility of the Boundary of the Thurston Set}}, Experimental Mathematics \textbf{32} (2023), no.~2, 405--422.

\bibitem[SS06]{Shmerkin2006ZerosSets}
Pablo Shmerkin and Boris Solomyak, \emph{{Zeros of {\{}-1, 0, 1{\}} Power Series and Connectedness Loci for Self-Affine Sets}}, Experimental Mathematics \textbf{15} (2006), no.~4, 499--511.

\bibitem[SS16]{Shmerkin2016AbsoluteConvolutions}
\bysame, \emph{Absolute continuity of complex {Bernoulli} convolutions}, Math. Proc. Camb. Philos. Soc. \textbf{161} (2016), no.~3, 435--453 (English).

\bibitem[SX03]{Solomyak2003OnConvolutions}
Boris Solomyak and Hui Xu, \emph{{On the `Mandelbrot set' for a pair of linear maps and complex Bernoulli convolutions}}, Nonlinearity \textbf{16} (2003), no.~5, 1733.

\end{thebibliography}

\end{document}